\DeclareMathAlphabet{\mathbfsf}{\encodingdefault}{\sfdefault}{bx}{n}
\numberwithin{equation}{section}
\newlist{steps}{enumerate}{1}
\setlist[steps, 1]{label = Step \arabic*:}
\newtheorem{thm}{Theorem}
\newtheorem{lem}[thm]{Lemma}
\newdefinition{definition}{Definition}
\newdefinition{rmk}{Remark}
\newdefinition{exmp}{Example}
\newproof{pf}{Proof}
\newproof{rmk-litt}{{\protect\textit{Remark on the litterature}}}
\newcommand {\corpsValeurs} {\mathbb{C}}
\newcommand {\corpsC} {\mathbb{C}}
\newcommand {\corpsR} {\mathbb{R}}
\newcommand{\vertiii}[1]{{\left\vert\kern-0.25ex\left\vert\kern-0.25ex\left\vert #1 
        \right\vert\kern-0.25ex\right\vert\kern-0.25ex\right\vert}}
\journal{---} 
\begin{document}

\begin{frontmatter}


\title{Integral representation formula for linear non-autonomous difference-delay equations} 

\author{L. Baratchart}
\author{S. Fueyo \footnote{Corresponding author.}}
\author{J.-B. Pomet}
\address{Université Côte d'Azur, Inria, Teams FACTAS and MCTAO,
  \\ 2004, route des Lucioles, 06902 Sophia Antipolis, France.}

\begin{abstract}
 This note states and proves an integral representation formula of the ``variation-of-constant'' type for  continuous solutions
  of linear non-autonomous difference delay systems, in terms of a Lebesgue-Stieltjes
  integral involving a fundamental solution and the initial data of the system.
  This gives a precise and correct version of several
  formulations appearing in the literature, and extends them to the
  time-varying case.
  This is of importance for further stability studies of various kinds of delay systems.
\end{abstract}

\begin{keyword}
linear systems, difference delay systems, integral representation, Volterra equations.
\end{keyword}

\end{frontmatter}

\section{Introduction and statement of the result}
\label{sec_resul}

Transport phenomena commonly occur in practical situations in biology or physics, for example to model  the migration of cells in an organism,
  as well as road traffic networks or high-frequency signals traveling through an electrical circuit.
  These phenomena are typically modelled by delay equations.
Basic properties thereof, such as stability, stabilization
and controllability, have been widely considered in the literature
\cite{Suarez,rihan2021delay,gu2003stability,loiseau2009topics}.
Reference \cite{Suarez} deals with applications to electrical engineering,
in particular  the
stability of microwave circuits, which was the
initial motivation of the authors to study time-periodic systems described by equations
such as \eqref{system_lin_formel} below; in fact, the latter
constitutes a natural
model for  the high-frequency limit system of a microwave circuit, whose stability is crucial to the one  of the circuit itself 
(see \cite{SFueyo,Soumis-SIMA}).

In this note,  we consider a linear non-autonomous difference-delay system of the form:
\begin{eqnarray}
\label{system_lin_formel}
  y(t)=\sum_{j=1}^ND_j(t)y(t-\tau_j), \qquad t > s,\qquad
y(s+\theta)=\phi(\theta)\  \mathrm{for}\  -\tau_N\leq \theta \leq0,
\end{eqnarray}
where 
$d$, $N$ are positive integers and
the delays $\tau_1 < \cdots < \tau_N$  are strictly positive real numbers,
while $D_1(t),\ldots,D_N(t)$ are complex\footnote{We treat here the case of
  complex coefficients (in
  the matrices $D_j$ as well as in the solutions); real coefficients
  can be handled
  in exactly the same way.}
$d\times d$ matrices depending continuously on time $t$,
the real number $s\in \mathbb{R}$ is the initial time
and the function $\phi:[-\tau_N,0]\to\corpsValeurs^d$  the
\emph{initial data}.
By writing $t>s$, we imply that this system is understood in forward time only. To deal with backward systems, one would have to make assumptions on the invertibility of the map $t \mapsto D_{\tau_N}(t)$.

A \emph{solution} is a map $y:[s-\tau_N,+\infty)\to\corpsValeurs^d$
that satisfies \eqref{system_lin_formel}.
  Clearly, in order to get a continuous solution, the initial data 
  which is but the restriction of $y$  to the interval $[-\tau_N,0]$
  must satisfy a compatibility condition, namely
$\phi$ should belong to the set $C_s$ defined by
\begin{eqnarray}
C_s :=\{ \phi \in  C^0([-\tau_N,0],\corpsValeurs^d):\,\phi(0)=\sum\limits_{j=1}^ND_j(s)\phi(-\tau_j)\},
\end{eqnarray}
where $C^0(E,F)$ indicates the set of continuous maps from $E$ into $F$.
Conversely, given $\phi$ in $C_s$, an easy recursion shows that System~\eqref{system_lin_formel} has 
a unique solution $y$ with initial data $\phi$, and that this solution is continuous.


The goal of this note is to give a precise statement, as well as a detailed proof, of the integral formula, often called representation formula (see \textit{e.g.} \cite{Hale}), expressing the  solution to System~\eqref{system_lin_formel} in terms of the initial condition $\phi\in C_s$ and the so-called \emph{fundamental solution}, that can be
  viewed as a particular (non-continuous) matrix-valued solution of \eqref{system_lin_formel}.
By definition, the fundamental solution is 
the map $X:\mathbb{R}^2\to\mathbb{C}^{d\times d}$ satisfying
\begin{eqnarray}  
 \label{solution_fondamentale}
X(t,s)=\left\{
 \begin{array}{ll}
 0 \mbox{ for $t<s$}, \\
I_d+\sum\limits_{j=1}^N D_j(t) X(t-\tau_j,s) \mbox{ for $t \ge s$},
\end{array}
\right.
 \end{eqnarray}
where $I_d$ denotes the $d\times d$ identity matrix.
Arguing inductively, it is easy to check that $X$ uniquely exists and is
continuous at each $(t,s)$ such that $t-s \notin \mathcal{F}$, where
$\mathcal{F}$ is the positive lattice in $[0,+\infty)$ generated by the numbers $\tau_\ell$:
\begin{eqnarray}
\label{set_discontinuities}
\mathcal{F}:= \left\{ \sum\limits_{\ell=1}^N n_\ell \, \tau_\ell \,,\; (n_1,\ldots,n_N)\in\mathbb{N}^N\right\}\,.
\end{eqnarray}
Clearly, $X$ has a  bounded jump across each line $t-s=\mathfrak{f}$ for
$\mathfrak{f}\in\mathcal{F}$; in fact, a moment's thinking will convince the reader that $X$ has the form
    \begin{equation}
    \label{eq:5}
    X(t,s)
    \,=\, - \sum_{\mathfrak{f}\in\,[0,t-s]\,\cap\,\mathcal{F}}\,\mathfrak{C}_\mathfrak{f}(t)\,,
    \quad s\leq t\,,
  \end{equation}
  where each
  $\mathfrak{C}_\mathfrak{f}(.)$ is continuous\footnote{
      Additional smoothness assumptions on the maps $D_j(.)$ would
      transfer to the $\mathfrak{C}_\mathfrak{f}$.  
One can also see that  $\mathfrak{C}_\mathfrak{f}(t)$ is a finite sum of products of
$D_j(t-\mathfrak{f}')$,
where $\mathfrak{f}'$ ranges over the elements of $\mathcal{F}$ whose
defining integers $n_\ell$ in \eqref{set_discontinuities} do not exceed
those defining $\mathfrak{f}$, the
empty product being the identity matrix.
A precise expression for $\mathfrak{C}_\mathfrak{f}$ can be obtained by
reasoning as in \cite[Sec. 3.2]{Chitour2016} or \cite[Sec. 4.5]{BFLP}, but we will not need it.
}
$\corpsR\to\corpsValeurs^{d\times d}$.
The announced representation formula is now given by the following
theorem, whose proof can be found in Section \ref{proof_the_fond}.
\begin{thm}[representation formula]
  \label{prop:fondamental}
  For  $s\in\corpsR$ and  $\phi$ in $C_s$ the 
  solution $y\in C^0([s-\tau_N,+\infty),\corpsValeurs^{d})$ to \eqref{system_lin_formel} is given by
  \begin{eqnarray}
  \label{formula_representation}
y(t)=-\sum\limits_{j=1}^N \int_{s^{-}}^{(s+\tau_j)^{-}} d_{\alpha}X(t,\alpha)D_j(\alpha)\phi(\alpha-\tau_j-s),\qquad t \ge s\,,
  \end{eqnarray}
  where $X$ was defined in \eqref{solution_fondamentale}. 
\end{thm}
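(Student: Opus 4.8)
The plan is to recognise the right-hand side of \eqref{formula_representation} as \emph{the} solution of \eqref{system_lin_formel} with initial data $\phi$, and then to invoke the uniqueness statement recalled in Section~\ref{sec_resul}. Set $h_k(\alpha):=D_k(\alpha)\,\phi(\alpha-\tau_k-s)$, a continuous $\corpsValeurs^{d\times d}$-valued map on $[s,s+\tau_k]$, and define, for \emph{every} $u\in\corpsR$,
\[
z(u)\;:=\;-\sum_{k=1}^N\int_{s^{-}}^{(s+\tau_k)^{-}} d_{\alpha}X(u,\alpha)\,h_k(\alpha).
\]
By \eqref{eq:5}, for fixed $u$ the map $\alpha\mapsto X(u,\alpha)$ is piecewise constant with finitely many bounded jumps on any compact interval, hence of bounded variation there, so each Lebesgue--Stieltjes integral is a well-defined finite vector and $z$ is defined on all of $\corpsR$; what we must show is that $z(u)=y(u)$ for $u\ge s$.

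I would first record two boundary facts. (a) $z(u)=0$ for $u<s$: for such $u$ one has $X(u,\alpha)=0$ for all $\alpha>u$, in particular on a neighbourhood of $[s,+\infty)$, so the measure $d_\alpha X(u,\alpha)$ vanishes on every integration range and each integral is $0$. (b) $z(s)=\phi(0)$: on $[s,s+\tau_k)$ the function $\alpha\mapsto X(s,\alpha)$ equals $I_d$ at $\alpha=s$ and $0$ on $(s,s+\tau_k)$, so it carries a single atom, located at $\alpha=s$, of mass $-I_d$; hence $z(s)=\sum_k h_k(s)=\sum_k D_k(s)\,\phi(-\tau_k)=\phi(0)$ by the compatibility condition $\phi\in C_s$.

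The core of the argument is the difference equation satisfied by $z$ on $(s,+\infty)$. I would start from the fact that \eqref{solution_fondamentale} yields, for \emph{every} $(t,\alpha)\in\corpsR^2$,
\[
X(t,\alpha)\;=\;I_d\,\mathbf 1_{\{\alpha\le t\}}\;+\;\sum_{j=1}^N D_j(t)\,X(t-\tau_j,\alpha),
\]
the identity holding for $\alpha\le t$ by the defining recursion and for $\alpha>t$ because then every term vanishes. Taking the Lebesgue--Stieltjes differential in $\alpha$ (the matrices $D_j(t)$ being constant in $\alpha$, and $d_\alpha\mathbf 1_{\{\alpha\le t\}}=-\delta_t$) gives $d_\alpha X(t,\alpha)=-I_d\,\delta_t(d\alpha)+\sum_j D_j(t)\,d_\alpha X(t-\tau_j,\alpha)$. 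Substituting this into the definition of $z(t)$, using linearity together with $\int_{s^{-}}^{(s+\tau_k)^{-}}\delta_t(d\alpha)\,h_k(\alpha)=h_k(t)\,\mathbf 1_{\{s\le t<s+\tau_k\}}$ and the identification $\sum_k\int_{s^{-}}^{(s+\tau_k)^{-}}d_\alpha X(t-\tau_j,\alpha)\,h_k(\alpha)=-z(t-\tau_j)$, I obtain, for every $t>s$,
\[
z(t)\;=\;\sum_{k\,:\,t-\tau_k<s} D_k(t)\,\phi(t-\tau_k-s)\;+\;\sum_{j=1}^N D_j(t)\,z(t-\tau_j).
\]
Now I extend $z$ to the left by the data: set $\bar z(u):=\phi(u-s)$ for $u\in[s-\tau_N,s]$ and $\bar z(u):=z(u)$ for $u\ge s$ (these agree at $u=s$ by (b)). Splitting $\sum_j D_j(t)\,z(t-\tau_j)$ according to the sign of $t-\tau_j-s$, discarding the terms with $t-\tau_j<s$ thanks to (a), and noting that the surviving terms together with the extra sum $\sum_{k:\,t-\tau_k<s}D_k(t)\phi(t-\tau_k-s)$ reassemble exactly into $\sum_j D_j(t)\,\bar z(t-\tau_j)$, one gets $\bar z(t)=\sum_j D_j(t)\,\bar z(t-\tau_j)$ for all $t>s$. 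Hence $\bar z$ is a solution of \eqref{system_lin_formel} with initial data $\phi\in C_s$; by the uniqueness recalled in Section~\ref{sec_resul} it coincides with $y$ (which is in particular continuous), and reading off the values for $t\ge s$ gives \eqref{formula_representation}.

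The only genuinely delicate part is the bookkeeping of the Lebesgue--Stieltjes conventions: one must fix once and for all, consistently, the (say left-) continuity normalisation of $\alpha\mapsto X(t,\alpha)$, the exact meaning of the half-open endpoints $s^{-}$ and $(s+\tau_j)^{-}$ (so that the atom at a lower endpoint is counted and the one at an upper endpoint is not), and the mass of each atom $\mathfrak{C}_\mathfrak{f}(t)$ occurring through \eqref{eq:5}. Once this is settled, the differential identity above and the evaluations of the $\delta_t$-integrals are literally correct, and everything else reduces to elementary bookkeeping over the finitely many atoms lying in each integration window.
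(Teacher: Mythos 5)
Your proof is correct, and it takes a genuinely different---and more elementary---route than the paper's. Both arguments exploit the fact that differentiating \eqref{solution_fondamentale} in the second variable and substituting into \eqref{formula_representation} reproduces the recursion \eqref{system_lin_formel}; but the paper uses this only for $t\ge s+\tau_N$ (where no Dirac term appears in the windows $[s,s+\tau_j)$) and then spends most of its effort establishing the formula on the initial strip $s\le t<s+\tau_N$: it recasts \eqref{system_lin_formel} as a Stieltjes--Volterra equation with a $B^\infty$ kernel, proves existence of a resolvent (Lemma~\ref{exresSV}) and a variation-of-constants formula (Lemma~\ref{solSVE}), inserts an approximation step reducing to $\phi$ and $D_j$ of bounded variation (needed because Lemma~\ref{solSVE} requires a $BV_r$ forcing term), and finally identifies the resolvent with $X$ on the strip. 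You instead keep the Dirac term $-I_d\,\delta_t$ coming from $d_\alpha\mathbf{1}_{\{\alpha\le t\}}$, which produces exactly the missing initial-data terms $\sum_{k:\,t-\tau_k<s}D_k(t)\phi(t-\tau_k-s)$, and together with your two boundary facts ($z\equiv 0$ below $s$, and $z(s)=\phi(0)$ via the compatibility condition defining $C_s$) this shows directly that the right-hand side of \eqref{formula_representation}, extended by $\phi$ on $[s-\tau_N,s]$, obeys the recursion for every $t>s$; uniqueness of the forward recursion (which requires no regularity of the candidate) then finishes the argument, with no need for the resolvent machinery or the density argument. Your Lebesgue--Stieltjes bookkeeping is sound: since on each window $[s,s+\tau_k)$ you integrate against the measure of the restricted function on both sides of a pointwise identity, linearity of $f\mapsto\nu_{f_{|J}}$ covers everything except the indicator term, which you evaluate correctly (the atom of mass $-1$ sits at $\alpha=t$ precisely when $s\le t<s+\tau_k$, including the endpoint case $t=s$); moreover left continuity of $X(t,\cdot)$ and of $\mathbf{1}_{\{\cdot\le t\}}$ makes the restriction-versus-restricted-measure issue harmless here. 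What each approach buys: yours is shorter and self-contained; the paper's heavier route additionally yields Lemmata~\ref{exresSV} and~\ref{solSVE}, which the authors present as of independent interest and use in their stability work. (Minor slip: $h_k$ is $\corpsValeurs^{d}$-valued, not $\corpsValeurs^{d\times d}$-valued; this affects nothing.)
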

The integrals $\int_{s^-}^{(s+\tau_j)^-}$ in
Equation~\eqref{formula_representation} are understood as Lebesgue-Stieltjes integrals on the intervals $[s,s+\tau_j)$. They are well defined because,
for fixed $t$, the function $X(t,\cdot)$ is locally of bounded variation.
Basic facts regarding Lebesgue-Stieltjes integral and
functions of bounded variation are recalled in Section~\ref{sub:sec1}, for the ease of the reader.

\section{Further motivation  and comments}
Representation formulas like \eqref{formula_representation} are
fundamental to deal with linear functional dynamical systems.
In particular,  they offer
  a base to devise stability  criteria
{\it via} a frequency domain approach, using Laplace transforms:
in \cite{Henry1974}, \cite{Hale} or in the recent
  manuscript~\cite{Soumis-SIMA} by the  authors,
exponential stability is investigated this way for  difference-delay system and various
functional differential equations, either time-invariant or time-varying.
In fact, \eqref{formula_representation} entails that the solutions of System~\eqref{system_lin_formel} are exponentially stable if the total variation of the function $X(t,.)$ on $[s,s+\tau_N]$ is bounded by $ce^{-\alpha (t-s)}$ for some strictly positive  $c$ and $\alpha$. Hence, a careful study of  $X(t,.)$ 
(for example through an analysis of the $\mathfrak{C}_\mathfrak{f}$
in \eqref{eq:5}) yields important information on the
exponential stability properties of that system. These results, in turn,  are relevant to the stability of 1-D
hyperbolic PDE's; see \textit{e.g.} \cite[Theorem 3.5 and Theorem
3.8]{bastin2016stability} and \cite{BARATCHART2022228}.

Though formulas of the type \eqref{formula_representation}
appear at several places in the literature for various classes of linear
dynamical systems, the purpose of the present note
is to state it  carefully and
prove it in detail.
Our motivation here is twofold.
On the one hand, the time-varying case has apparently
not been treated.
On the other hand, and more importantly perhaps, we found that,
even for linear \emph{autonomous} difference-delay
equations, several representation
formulas stated in the literature (like \cite[Lemma 3.4]{Henry1974},
\cite[Chapter 12, Equation (3.16)]{Haleone} or \cite[Chapter 9, Theorem 1.2]{Hale} that deals with more general Volterra equations) seem to have issues\footnote{For instance, the formula in \cite[Lemma
  3.4]{Henry1974}, stated for time-invariant systems of the same type as
  \eqref{system_lin_formel} (possibly with infinitely many delays) does not agree with
  \eqref{formula_representation}. A check on systems with two delays will convince the
  reader that it is faulty because, probably due to misprinted indices,
  the integration is not carried out on the
  right interval. Still, our
    formula agrees with the one in \cite{Henry1974} in the case
    of a single delay ($N=1$)}, along with their proofs;
see Section~\ref{sec:VK} and Footnote~\ref{foot-le-pied}. For instance, when
  seeking to establish \eqref{formula_representation} for systems having periodic coefficients, which is a basic ingredient of the proofs in \cite{Soumis-SIMA}, the authors could not even come up with a satisfactory reference
 in the time-invariant case. The {\it raison d'être} for the present note is thus
  to present a result that  addresses the non-autonomous case, and
  at the same time that can be referenced even in the time-invariant setting. Our method of proof
  is in line   with the approach in \cite[Chapter 9, Sec. 1]{Hale}, trading \eqref{solution_fondamentale} for a Volterra equation and proving the existence of a resolvent for the latter. In this connection, Lemmata \ref{exresSV} and  \ref{solSVE} below
  are fundamental steps of the proof, that we could not locate in the literature and may be of independent interest.

\section{Summary on functions with bounded variations,  Lebesgue-Stieltjes integral and Volterra integral equations with $B^{\infty}$ kernel}
\label{sec:notation}

  In this section, we recall definitions and basic facts regarding functions of bounded variation
  and  Lebesgue-Stieltjes integrals, that the reader might want to consult
    before proceeding with Lemma \ref{exresSV},  Lemma  \ref{solSVE}
  and the proof of Theorem \ref{prop:fondamental}.

The real and complex fields are denoted by
  $\corpsR$ and $\corpsC$. For $d$ a strictly positive integer, we
  write $\|\cdot\|$ for the Euclidean norm on $\corpsC^d$
and  $\vertiii{\cdot}$ for the norm of a matrix $M \in \corpsC^{d
  \times d}$: $$\vertiii{M}=\sup_{\|x\|=1}\|Mx\|.$$

\subsection{Functions with bounded variations and the Lebesgue-Stieltjes integral}
\label{sub:sec1}
For $I$ a real interval and $f: I\to \mathbb{R}$ a function,
the \emph{total variation} of $f$ on $I$ is defined as
\begin{equation}
\label{defvar}
W_I(f):=\sup_{\stackrel{x_0<x_1<\cdots<x_N}{x_i\in I, N\in\mathbb{N}}}\sum_{i=1}^N|f(x_i)-f(x_{i-1})|.
\end{equation}
The space $BV(I)$ of \emph{functions with bounded variation} on $I$
consists of those $f$ such that  $W_I(f)<\infty$,
endowed with the  norm $\|f\|_{BV(I)}=W_{I}(f)+|f(y_0)|$ where $y_0\in I$ is arbitrary but fixed. 
Different $y_0$ give rise to equivalent norms for which
$BV(I)$ is a Banach space, and 
$\|.\|_{BV(I)}$ is stronger than the uniform norm.
We let $BV_r(I)$ and  $BV_l(I)$ be the closed subspaces
of $BV(I)$ comprised of right and left continuous functions, respectively.
We write $BV_{loc}(\mathbb{R})$ for the space of functions whose restriction to any bounded interval $I\subset \mathbb{R}$ lies in $BV(I)$.
Since
$
f(x_i)g(x_i)-f(x_{i-1})g(x_{i-1})$ $=\bigl(f(x_i)-f(x_{i-1})\bigr)g(x_i)+f(x_{i-1})\bigl(g(x_i)-g(x_{i-1})\bigr),
$
we observe from \eqref{defvar} that
\begin{equation}
\label{varprod}
W_I(fg)\leq W_I(f)\sup_{x\in I}|g(x)|+W_I(g)\sup_{x\in I}|f(x)|.
\end{equation}
Each $f\in BV(I)$ has a limit $f(x^-)$ (resp. $f(x^+)$) from the left (resp. right) at every $x\in I$
where the limit applies \cite[sec. 1.4]{lojasiewicz1988introduction}. Hence,
one can associate to $f$ a
finite signed Borel measure $\nu_f$ on $I$ such that
$\nu_f((a,b))=f(b^-)-f(a^+)$, and if $I$ is bounded on the right (resp. left) and contains its endpoint $b$ (resp. $a$), then $\nu_f(\{b\})=f(b)-f(b^-)$
(resp. $f(a^+)-f(a)$) \cite[ch. 7, pp. 185--189]{lojasiewicz1988introduction}.
Note  that different $f$ may generate the same $\nu_f$: for example 
 if $f$ and $f_1$ coincide except  at isolated interior points
 of $I$,  then $\nu_f=\nu_{f_1}$. For $g:I\to\mathbb{R}$ a measurable 
function  summable against $\nu_f$, 
the \emph{Lebesgue-Stieltjes integral} $\int gdf$ is defined as
$\int gd\nu_f$, whence the differential element $df$ identifies with $d\nu_f$
\cite[ch. 7, pp. 190--191]{lojasiewicz1988introduction}. This type of integral
is
useful  for it is suggestive of integration by parts, but  caution
must be used when integrating a function against $df$ over
a subinterval $J\subset I$ because the measure 
$\nu_{(f_{|J})}$ associated to the restricted function $f_{|J}$ needs \emph{not} coincide with the restriction $({\nu_f})_{|J}$ of the measure $\nu_{f}$ to $J$. More precisely, if the lower bound $a$ (resp. the upper bound $b$)
of $J$ belongs to $J$ and lies interior to $I$, then the two measures may differ by the weight they put on $\{a\}$ (resp. $\{b\}$), and they agree only when
$f$ is left (resp. right) continuous at $a$ (resp. $b$).
By $\int_J gdf$, we always mean that we integrate $g$ against $\nu_{(f_{|J})}$
and \emph{not} against  $({\nu_f})_{|J}$. As in the main formula \eqref{formula_representation},
we often trade the notation  $\int_J gdf$ for  one of the form
$\int_{a^\pm}^{b^\pm}gdf$, where  the interval of integration $J$ is encoded
in  the  bounds we put on the  integral sign: a lower bound $a^-$ (resp. $a^+$) means that
$J$ contains (resp. does not contain) its lower bound $a$, while an
upper bound $b^+$ (resp. $b^-$) means that
$J$ contains (resp. does not contain) its upper bound $b$.
Then, the previous word of caution applies to additive rules: for example,
when splitting $\int_{a^\pm}^{b^\pm}gdf$ into
  $\int_{a^\pm}^{c^\pm}gdf+\int_{c^\pm}^{b^\pm}gdf$ where $c\in(a,b)$, we must use $c^+$ (resp. $c^-$) if $f$ is right (resp. left) continuous at $c$.

  To a finite, signed or complex Borel measure $\mu$ on $I$,
  one can associate its \emph{total variation measure} $|\mu|$,
  defined on a Borel set $B\subset I$ by
$|\mu|(B)=\sup_{\mathcal{P}}\sum_{E\in\mathcal{P}}|\mu(E)|$ where $\mathcal{P}$ ranges over all partitions of $B$ into Borel sets, see \cite[sec. 6.1]{Rudin};
its total mass $|\mu|(I)$ is called the total variation of $\mu$,
denoted as $\|\mu\|$.
Thus, the total variation is defined both for functions of bounded variation
and for measures, with different meanings. When $f\in BV(I)$ is
monotonic then $W_{I}(f)=\|\nu_f\|$, but in general it only holds  that
 $\|\nu_f\|\leq2W_{I}(f)$; this follows from the Jordan decomposition of $f$
 as a difference of two increasing functions, each of which has variation at most
 $W_{I}(f)$ on $I$ \cite[Thm.\ 1.4.1]{lojasiewicz1988introduction}. 
 In any case, it holds that
  $|\int gdf|\leq \int|g|d|\nu_f|\leq 2W_I(f)\sup_I|g|$.
  The previous notations and definitions   also apply to 
vector and matrix-valued functions  $BV$-functions, replacing absolute values
in \eqref{defvar} by  Euclidean and matricial norms, respectively.

 \subsection{Volterra integral equations with kernels of type $B^{\infty}$}

\label{sec:VK}
%
\noindent\textit{Volterra equations for functions of a single variable
  have been studied extensively, see {\it e.g.}
  \textup{\cite{brunner2017volterra,gripenberg1990volterra}}. 
  However, the specific assumption that the kernel has bounded variation
  seems to have been treated somewhat tangentially. 
  On the one hand, it is subsumed in the measure-valued case presented
  in \textup{\cite[Ch.\ 10]{gripenberg1990volterra}}, but no convenient
  criterion is given there for the existence of a resolvent kernel.
  On the other hand, \textup{\cite[Ch.\ 9, Sec.\ 1]{Hale}} sketches the
  main arguments needed to handle kernels of bounded variation, but the
  exposition has issues\footnote{\label{foot-le-pied} For example, the
    equation for  $\widetilde{\rho}(t,s)$ stated at the top of page 258
    of that reference is
    not right.}.
}

We define a \emph{Stieltjes-Volterra  kernel of type $B^\infty$}  on $[a,b]\times[a,b]$ as  a  measurable 
function $\kappa:[a,b]\times[a,b]\to\mathbb{R}^{d\times d}$, with
$\kappa(t,\tau)=0$ for $\tau\geq t$,
such that the partial maps $\kappa(t,.)$ lie in 
$BV_l([a,b])$  and 
$\|\kappa(t,.)\|_{BV([a,b])}$ is uniformly
bounded with respect to $t\in[a,b]$. In addition, we require that
$\lim_{\tau\to t^-}W_{[\tau,t)}(\kappa(t,.))=0$ uniformly with respect to $t$; {\it i.e.}, to every $\varepsilon>0$, there exists $\eta>0$ such that
$W_{[\tau,t)}(\kappa(t,.))<\varepsilon$ as soon as $0<t-\tau<\eta$.
Note that $W_{[\tau,t)}(\kappa(t,.))\to0$ for fixed $t$ as $\tau\to t^-$ since $\kappa(t,.)$ has bounded variation on $[a,b]$, by the very definition \eqref{defvar}; so, the assumption really is  that the convergence is uniform
with respect to $t$. 
We endow the space $\mathcal{K}_{[a,b]}$ of such kernels with the norm 
$\|\kappa\|_{[a,b]}:=\sup_{t\in[a,b]}\|\kappa(t,.)\|_{BV([a,b])}$.
If $\kappa_k$ is a Cauchy sequence in $\mathcal{K}_{[a,b]}$,
then $\kappa_k$ converges uniformly on $[a,b]\times[a,b]$ to a $\mathbb{R}^{d\times d}$-valued function 
$\kappa$ because
\[
\vertiii{\kappa_k(t,\tau)-\kappa_l(t,\tau)}=\vertiii{(\kappa_k(t,\tau)-\kappa_l(t,\tau))-
(\kappa_k(t,t)-\kappa_l(t,t))}\leq\|\kappa_k(t,.)-\kappa_l(t,.)\|_{BV([a,b])}.
\]
Moreover, if $m$ is so large that
$\|\kappa_k-\kappa_l\|_{[a,b]}<\varepsilon$ for $k,l\geq m$ and
$\eta>0$ so small that $W_{[\tau,t)}(\kappa_m)<\varepsilon$ when $t-\tau<\eta$,
we get that $W_{[\tau,t)}(\kappa_l)\leq W_{[\tau,t)}(\kappa_m)+W_{[\tau,t)}(\kappa_m-\kappa_l)<2\varepsilon$, and letting $l\to\infty$ we get from
\cite[thm.\ 1.3.5]{lojasiewicz1988introduction} that $W_{[\tau,t)}(\kappa)\leq2\varepsilon$. The same reference implies that
$\|\kappa\|_{[a,b]}\leq \sup_k\|\kappa_k\|_{[a,b]}$, so that
$\kappa\in \mathcal{K}_{[a,b]}$. Finally, writing 
that $W_{[a,b]}(\kappa_k(t,.)-\kappa_l(t,.))<\varepsilon$ for each $t$ when
$k,l\geq m$
and passing to the limit as $l\to\infty$, we see that 
$\lim_k\|\kappa_k-\kappa\|_{[a,b]}=0$ whence
$\mathcal{K}_{[a,b]}$ is  a Banach space.
Note that a Stieltjes-Volterra kernel $\kappa$ of type $B^{\infty}$ is necessarily bounded with
$\sup_{[a,b]\times[a,b]}\vertiii{\kappa(t,\tau)}\leq\|\kappa\|_{[a,b]}$.

A \emph{resolvent} for the Stieltjes-Volterra kernel $\kappa$
on $[a,b]\times[a,b]$
is  a Stieltjes-Volterra  kernel $\rho$ on
$[a,b]\times[a,b]$ satisfying
\begin{equation}
\label{Stieltjesk}
\rho(t,\beta)=-\kappa(t,\beta)+\int_{\beta^-}^{t^-}d\kappa(t,\tau)\rho(\tau,\beta),
\qquad a\leq t,\beta \leq  b.
\end{equation}
We stress that $d\kappa(t,\tau)$ under the integral sign 
  is here the differential with respect to $\tau$ of a matrix-valued measure, and
  that $d\kappa(t,\tau)\rho(\tau,\beta)$   is a matrix
  product resulting in a matrix whose entries are sums of ordinary Lebesgue-Stieltjes integrands. Because matrices do not commute, a (generally) different result would be obtained upon writing $\int_{\beta^-}^{t^-}\rho(\tau,\beta) d\kappa(t,\tau)$. In all cases,
the repeated variable under the integral sign ($\tau$ in the present case) is a dummy one.

  The following two lemmata, proved in Section~\ref{proof_the_fond}, are the technical core of this paper.

%
%
\begin{lem}
\label{exresSV}
If $\kappa$ is a Stieltjes-Volterra kernel of type $B^{\infty}$ on $[a,b]\times[a,b]$,
a resolvent for $\kappa$ uniquely exists.
\end{lem}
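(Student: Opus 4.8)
The plan is to recast the resolvent equation \eqref{Stieltjesk} as a fixed-point problem and solve it by a contraction argument with respect to a weighted norm that exploits the Volterra (causal) structure of the equation. Introduce the linear operator $\mathcal{T}$ on $\mathcal{K}_{[a,b]}$ defined by $(\mathcal{T}\rho)(t,\beta):=\int_{\beta^-}^{t^-}d\kappa(t,\tau)\,\rho(\tau,\beta)$, so that \eqref{Stieltjesk} reads $\rho=-\kappa+\mathcal{T}\rho$. First I would check that $\mathcal{T}$ maps $\mathcal{K}_{[a,b]}$ into itself. That $(\mathcal{T}\rho)(t,\beta)=0$ for $\beta\ge t$ is clear, and joint measurability in $(t,\beta)$ is routine. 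For left-continuity of $(\mathcal{T}\rho)(t,\cdot)$: as $\beta\uparrow\beta_0$, the contribution of the shrinking interval $[\beta,\beta_0)$ tends to $0$ because $|\nu_{\kappa(t,\cdot)}|([\beta,\beta_0))\to0$, while on $[\beta_0,t)$ one has $\rho(\tau,\beta)\to\rho(\tau,\beta_0)$ pointwise in $\tau$ with a $\beta$-independent bound, so dominated convergence applies; here one uses left-continuity of $\rho(\tau,\cdot)$. The uniform bound on $\|(\mathcal{T}\rho)(t,\cdot)\|_{BV([a,b])}$ and the uniform-smallness condition $W_{[\sigma,t)}((\mathcal{T}\rho)(t,\cdot))\to0$ will come out of the estimates of the next paragraph. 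Throughout, the bookkeeping of one-sided limits in the Lebesgue--Stieltjes calculus must be handled carefully, but it is benign: since each $\kappa(t,\cdot)$ is left-continuous, for a half-open subinterval $[c,c')\subset[a,b]$ the measure $\nu_{(\kappa(t,\cdot)|_{[c,c')})}$ coincides with the restriction $(\nu_{\kappa(t,\cdot)})|_{[c,c')}$, and the additive splitting $\int_{\beta^-}^{t^-}=\int_{\beta^-}^{c^-}+\int_{c^-}^{t^-}$ is licit.

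The core estimate is as follows. By the defining uniform-smallness property of $\kappa$, the quantity $\omega(\delta):=\sup_t|\nu_{\kappa(t,\cdot)}|\bigl([t-\delta,t)\bigr)$ satisfies $\omega(\delta)\le 2\sup_t W_{[t-\delta,t)}(\kappa(t,\cdot))\to0$ as $\delta\to0$, whereas $|\nu_{\kappa(t,\cdot)}|$ has total mass at most $2\|\kappa\|_{[a,b]}=:2M$ on $[a,b]$. Splitting the integral at $t-\delta$ and bounding the weight by $e^{-\lambda\delta}$ away from the diagonal yields, for every $t$,
\[
I_\lambda(t):=\int_{[a,t)}e^{\lambda(\tau-t)}\,d|\nu_{\kappa(t,\cdot)}|(\tau)\ \le\ 2Me^{-\lambda\delta}+\omega(\delta),
\]
so $\sup_t I_\lambda(t)$ can be made smaller than $1/6$ by first fixing $\delta$ small and then $\lambda$ large. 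I would then renorm $\mathcal{K}_{[a,b]}$ by the equivalent norm $\|\rho\|_\lambda:=\sup_{t\in[a,b]}e^{-\lambda(t-a)}\|\rho(t,\cdot)\|_{BV([a,b])}$. Starting from the decomposition
\[
(\mathcal{T}\rho)(t,\beta_2)-(\mathcal{T}\rho)(t,\beta_1)=\int_{\beta_2^-}^{t^-}d\kappa(t,\tau)\bigl[\rho(\tau,\beta_2)-\rho(\tau,\beta_1)\bigr]-\int_{\beta_1^-}^{\beta_2^-}d\kappa(t,\tau)\,\rho(\tau,\beta_1),\quad \beta_1<\beta_2,
\]
together with $\sup_\beta\vertiii{\rho(\tau,\beta)}\le\|\rho(\tau,\cdot)\|_{BV}\le e^{\lambda(\tau-a)}\|\rho\|_\lambda$ and $\vertiii{\int g\,d\nu}\le\int\vertiii{g}\,d|\nu|$, one telescopes over an arbitrary increasing partition, exchanges summation and integration, and arrives at $W_{[a,b]}\bigl((\mathcal{T}\rho)(t,\cdot)\bigr)\le 2I_\lambda(t)e^{\lambda(t-a)}\|\rho\|_\lambda$ and $\vertiii{(\mathcal{T}\rho)(t,a)}\le I_\lambda(t)e^{\lambda(t-a)}\|\rho\|_\lambda$; hence $\|\mathcal{T}\rho\|_\lambda\le 3\bigl(\sup_t I_\lambda(t)\bigr)\|\rho\|_\lambda<\tfrac12\|\rho\|_\lambda$. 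Running the same computation with the range of $\tau$ restricted to $[\sigma,t)$ gives $W_{[\sigma,t)}\bigl((\mathcal{T}\rho)(t,\cdot)\bigr)\le C\,\omega(t-\sigma)\,\|\rho\|_{[a,b]}$, which completes the verification that $\mathcal{T}\rho\in\mathcal{K}_{[a,b]}$.

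Once this is established, $\mathcal{T}$ is a bounded linear operator of norm $<1$ on the Banach space $(\mathcal{K}_{[a,b]},\|\cdot\|_\lambda)$ (an equivalent renorming of the Banach space $\mathcal{K}_{[a,b]}$), so the Neumann series $\sum_{n\ge0}\mathcal{T}^n\kappa$ converges there and $\rho:=-\sum_{n\ge0}\mathcal{T}^n\kappa$ is the unique element of $\mathcal{K}_{[a,b]}$ satisfying $\rho=-\kappa+\mathcal{T}\rho$, that is, \eqref{Stieltjesk}. Since a resolvent is by definition a Stieltjes--Volterra kernel of type $B^\infty$, this proves both existence and uniqueness. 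I expect the main obstacle to be not the contraction estimate itself but the verification that $\mathcal{T}$ preserves $\mathcal{K}_{[a,b]}$ — in particular the left-continuity of $(\mathcal{T}\rho)(t,\cdot)$ and the uniform-smallness condition — and keeping consistent, across all the estimates, the treatment of one-sided limits and the distinction between $\nu_{(f|_J)}$ and $(\nu_f)|_J$; fortunately the left-continuity of the kernels in their second variable makes all these endpoint corrections disappear.
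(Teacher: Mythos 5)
Your proposal is correct and takes essentially the same approach as the paper: the paper puts the exponential weight $e^{-r(t-\tau)}$ inside the operator and works with the transformed unknown $e^{-rt}\rho(t,\beta)$ in the original norm of $\mathcal{K}_{[a,b]}$, which is exactly your Bielecki-norm argument in disguise, and the key steps coincide (left-continuity via dominated convergence plus vanishing mass of shrinking intervals, the split of the integral at $t-\delta$ using the uniform smallness of $W_{[\tau,t)}(\kappa(t,\cdot))$ near the diagonal and exponential damping away from it, then a fixed point/Neumann series in the Banach space $\mathcal{K}_{[a,b]}$).
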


\begin{lem}
\label{solSVE}
Let $\kappa$ be a Stieltjes-Volterra kernel of type $B^{\infty}$ on $[a,b]\times[a,b]$
and $\rho$ its resolvent.  For each $\mathbb{R}^d$-valued function
$g\in BV_r([a,b])$, the unique bounded measurable solution
to the equation 
\begin{equation}
\label{ap32}
y(t)=\int_{a^-}^{t^-} d\kappa(t,\tau) y(\tau)+g(t), \qquad a\leq t\leq b,
\end{equation}
is given by
\begin{equation}
\label{eqres2}
y(t)=g(t)-\int_{a^-}^{t^-}d\rho(t,\alpha) g(\alpha),\qquad a\leq t\leq  b.
\end{equation}
\end{lem}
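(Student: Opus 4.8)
The plan is to verify directly that the function $y$ defined by \eqref{eqres2} solves \eqref{ap32}, and then to argue uniqueness separately. First I would check that \eqref{eqres2} indeed defines a bounded measurable function: since $\rho$ is a Stieltjes-Volterra kernel of type $B^\infty$, the partial map $\rho(t,\cdot)$ lies in $BV_l([a,b])$ with variation bounded uniformly in $t$, so the Lebesgue-Stieltjes integral $\int_{a^-}^{t^-}d\rho(t,\alpha)g(\alpha)$ is well defined and bounded by $2\|\rho\|_{[a,b]}\sup_{[a,b]}\|g\|$, using the inequality $|\int g\,df|\leq 2W_I(f)\sup_I|g|$ recalled in Section~\ref{sub:sec1}; measurability in $t$ follows from the uniform-variation hypothesis together with continuity/measurability of $\rho$ in its first argument. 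The main computation is then to substitute \eqref{eqres2} into the right-hand side of \eqref{ap32}: one writes
\begin{equation*}
\int_{a^-}^{t^-}d\kappa(t,\tau)y(\tau)+g(t)
= \int_{a^-}^{t^-}d\kappa(t,\tau)g(\tau)
- \int_{a^-}^{t^-}d\kappa(t,\tau)\!\int_{a^-}^{\tau^-}d\rho(\tau,\alpha)g(\alpha)
+ g(t),
\end{equation*}
and the goal is to show the right-hand side equals $g(t)-\int_{a^-}^{t^-}d\rho(t,\alpha)g(\alpha)$, i.e. that the two integral terms above combine, via the resolvent equation \eqref{Stieltjesk}, into $-\int_{a^-}^{t^-}d\rho(t,\alpha)g(\alpha)$.

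The heart of the argument is a Fubini-type interchange in the double Stieltjes integral $\int_{a^-}^{t^-}d\kappa(t,\tau)\int_{a^-}^{\tau^-}d\rho(\tau,\alpha)g(\alpha)$, rewriting it (after switching the order of integration over the triangle $a\le\alpha\le\tau\le t$, with due care because $\rho(\tau,\cdot)$ depends on $\tau$, so one is really integrating the function $(\tau,\alpha)\mapsto d\kappa(t,\tau)d\rho(\tau,\alpha)$ against a product of measures) as $\int_{a^-}^{t^-}\Bigl(\int_{\alpha^-}^{t^-}d\kappa(t,\tau)\,d\rho(\tau,\alpha)\Bigr)g(\alpha)$. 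Recognizing the inner integral as $\int_{\alpha^-}^{t^-}d\kappa(t,\tau)\rho(\tau,\alpha)$ in the sense of \eqref{Stieltjesk} — here one must be careful about how the differential in $\alpha$ interacts with the variation in $\tau$, which is exactly the kind of subtlety flagged in the footnote on p.~258 of \cite{Hale} — the resolvent equation \eqref{Stieltjesk} gives $\int_{\alpha^-}^{t^-}d\kappa(t,\tau)\rho(\tau,\alpha)=\rho(t,\alpha)+\kappa(t,\alpha)$. Substituting back, the $\kappa$-against-$g$ term cancels and one is left with $g(t)-\int_{a^-}^{t^-}d\rho(t,\alpha)g(\alpha)$, as desired. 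To justify the Fubini interchange rigorously I would approximate the Lebesgue-Stieltjes integrals by Riemann-Stieltjes sums over common partitions, or equivalently appeal to the product-measure formulation of \cite[Ch.\ 7]{lojasiewicz1988introduction}; the bookkeeping over the one-sided endpoints (the $^-$ and $^+$ conventions introduced in Section~\ref{sub:sec1}, and the left-continuity of $\kappa(t,\cdot)$ and $\rho(t,\cdot)$) is what makes the endpoint terms match up correctly and is where I expect most of the care to be needed.

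For uniqueness, suppose $y_1,y_2$ are two bounded measurable solutions of \eqref{ap32}; then $w=y_1-y_2$ satisfies the homogeneous equation $w(t)=\int_{a^-}^{t^-}d\kappa(t,\tau)w(\tau)$. Iterating this identity $n$ times produces $w(t)$ as an $n$-fold iterated Stieltjes integral of $w$ against $\kappa$; using the uniform smallness hypothesis $W_{[\tau,t)}(\kappa(t,\cdot))\to 0$ uniformly (choose $\eta$ so this variation is $<1/2$ on intervals of length $<\eta$, cover $[a,b]$ by finitely many such intervals, and proceed inductively from $t=a$), one gets on each small subinterval a bound of the form $\sup|w|\le (1/2)\sup|w|$ plus a contribution from the previous subinterval, forcing $w\equiv 0$; this is essentially the same Volterra/Gronwall mechanism used to prove existence of the resolvent in Lemma~\ref{exresSV}, so I would either cite that argument or run a short contraction on $C^0([a,a+\eta])$ first and then bootstrap along $[a,b]$. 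The main obstacle throughout is purely the noncommutative Stieltjes Fubini step and the attendant endpoint bookkeeping; once that is set up cleanly, both the verification and the uniqueness are routine.
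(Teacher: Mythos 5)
Your overall skeleton (direct verification of \eqref{eqres2} plus a separate uniqueness argument) matches the paper's, and your uniqueness half is essentially sound: subdividing $[a,b]$ into intervals where $W_{[\tau,t)}(\kappa(t,\cdot))$ is small and bootstrapping a sup-bound is equivalent to the paper's exponentially weighted argument (take the variation $\le 1/4$ rather than $<1/2$, because of the factor $2$ in $\|\nu_f\|\le 2W_I(f)$, and work with bounded measurable functions rather than $C^0([a,a+\eta])$, since solutions of \eqref{ap32} are not assumed continuous).

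The genuine gap is in the verification half, precisely at the step you call the heart of the argument. The interchange you propose, from $\int_{a^-}^{t^-}d\kappa(t,\tau)\int_{a^-}^{\tau^-}d\rho(\tau,\alpha)g(\alpha)$ to $\int_{a^-}^{t^-}\bigl(\int_{\alpha^-}^{t^-}d\kappa(t,\tau)\,d\rho(\tau,\alpha)\bigr)g(\alpha)$, is \emph{not} a product-measure Fubini: the inner measure $d_\alpha\rho(\tau,\alpha)$ depends on the outer variable $\tau$, so neither the product-measure formulation of \cite[Ch.~7]{lojasiewicz1988introduction} nor common-partition Riemann--Stieltjes sums (which are anyway delicate when integrand and integrator share jump points) applies as stated. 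After the interchange, what multiplies $g(\alpha)$ is the mixture measure $B\mapsto\int_{a^-}^{t^-}d\kappa(t,\tau)\,\nu_{\rho(\tau,\cdot)|_{[a,\tau)}}(B)$, and you identify it with $d_\alpha\bigl[\rho(t,\alpha)+\kappa(t,\alpha)\bigr]$ by invoking \eqref{Stieltjesk}; but \eqref{Stieltjesk} is an identity between \emph{functions} of $\alpha$, and upgrading it to the corresponding identity between \emph{measures} is not formal, since $\alpha$ enters both the integrand and the lower limit of integration. It can be proved (evaluate both measures on half-open intervals, use left continuity of $\kappa(t,\cdot)$ and $\rho(\tau,\cdot)$ together with \eqref{Stieltjesk}, plus a joint-measurability argument and care at the endpoint $a$), but that is exactly the missing core, and none of it is supplied. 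The paper avoids the kernel-dependent interchange altogether: it first integrates by parts in the inner integral --- this is where the hypothesis $g\in BV_r([a,b])$, which your plan never uses, is essential --- converting $\int_{a^+}^{\alpha^-}d\rho(\alpha,\beta)g(\beta)$ into $\int_{a^+}^{\alpha^-}\rho(\alpha,\beta)\,dg(\beta)$ plus boundary terms; the double integral then involves the two \emph{fixed} measures $\nu_{\kappa(t,\cdot)}$ and $\nu_g$ with the bounded integrand $\rho(\alpha,\beta)$, so ordinary Fubini applies, \eqref{Stieltjesk} is used pointwise, and a second integration by parts (with the endpoint bookkeeping at $a$ and $t$ following the conventions of Section~\ref{sub:sec1}) yields $-\int_{a^-}^{t^-}d\rho(t,\beta)g(\beta)$. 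Either carry out the measure-identification lemma in full, or follow the integration-by-parts route; as written, the key step is asserted rather than proved.
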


The importance of Lemmata \ref{exresSV} and \ref{solSVE} stems from the fact that to prove our main result at the end of Section~\ref{proof_the_fond}, we will frame the difference delay
  equation \eqref{system_lin_formel} into a Stieltjes-Volterra integral equation with $B^{\infty}$ kernel.


\section{Proofs}

\label{proof_the_fond}

The proofs of Lemmata \ref{exresSV} and \ref{solSVE} are given before
the one of Theorem~\ref{prop:fondamental}, which relies on them.


\begin{proof}[Proof of Lemma~\ref{exresSV}]
Pick $r>0$ to be adjusted later, and for $\Psi\in\mathcal{K}_{[a,b]}$ let us 
define
\[F_r(\Psi)(t,\beta):=\int_{\beta^-}^{t^-}e^{-r(t-\tau)}d\kappa(t,\tau)\Psi(\tau,\beta),\qquad a\leq \beta,t\leq b.\] 
Then, $F_r(\Psi)(t,\beta)=0$ for $\beta\geq t$, and for $a\leq \beta_1<\beta_2< t$ 
we have that
\[
F_r(\Psi)(t,\beta_2)-F_r(\Psi)(t,\beta_1)=\!\int_{\beta_2^-}^{t^-}\!\!e^{-r(t-\tau)}d\kappa(t,\tau)\left(\Psi(\tau,\beta_2)-\Psi(\tau,\beta_1)\right)-\!\int_{\beta_1^-}^{\beta_2^-}\!\!e^{-r(t-\tau)}d\kappa(t,\tau)\Psi(\tau,\beta_1),
\]
where we used that $\kappa(t,.)$ is left continuous to assign the lower
(resp. upper)
bound $\beta_2^-$ to the first (resp. second) integral in the above right hand side. Now, the first integral  goes to $0$ as $\beta_1\to\beta_2$ by dominated convergence, as $\Psi(t,.)$ is left-continuous; the second integral also goes to
$0$, because 
$|\nu_{\kappa(t,.)}|([\beta_1,\beta_2))\to0$ when $\beta_1\to\beta_2$, since $\cap_{\beta_1\in[a,\beta_2)}[\beta_1,\beta_2)=\varnothing$. Altogether, we see that $F_r(\Psi)(t,.)$ is
left-continuous. Moreover, 
for $[c,d]\subset[a,t)$ and $c=\beta_0<\beta_1<\cdots<\beta_N=d$, one has:
\begin{align*}
  \sum_{i=1}^N\vertiii{F_r(\Psi)(t,\beta_i)-F_r(\Psi)(t,\beta_{i-1})}
  \hspace{-13em}&
  \\
                &\leq
\sum_{i=1}^N\vertiii{ \int_{\beta_i^-}^{t^-}
\!\!\!e^{-r(t-\tau)}\,d\kappa(t,\tau)  \left(\Psi(\tau,\beta_i)-\Psi(\tau,\beta_{i-1})\right)}
  +\sum_{i=1}^{N}\vertiii{ \int_{\beta_{i-1}^-}^{\beta_i^-}e^{-r(t-\tau)}d\kappa(t,\tau)\Psi(\tau,\beta_{i-1})}
  \\
  &\leq 
\sum_{i=1}^N\int_{\beta_i^-}^{t^-}
\!\!\!e^{-r(t-\tau)}\,d|\nu_{\kappa(t,.)}|\,\vertiii{(\Psi(\tau,\beta_i)-\Psi(\tau,\beta_{i-1}))}
    +\sum_{i=1}^N\int_{\beta_{i-1}^-}^{\beta_i^-}
\!\!\!e^{-r(t-\tau)}\,d|\nu_{\kappa(t,.)}|\,\vertiii{\Psi(\tau,\beta_{i-1})}
  \\&\leq 
\int_{d^-}^{t^-}
  \!\!d|\nu_{\kappa(t,.)}|\,\sum_{i=1}^N\vertiii{(\Psi(\tau,\beta_i)-\Psi(\tau,\beta_{i-1}))}
  \\&\qquad
+e^{-r(t-d)}\int_{c^-}^{d^-} 
  d|\nu_{\kappa(t,.)}|\sum_{i=1}^N\vertiii{(\Psi(\tau,\beta_i)-\Psi(\tau,\beta_{i-1}))}
  +\sup_{[a,t]\times[a,t]}\vertiii{\Psi}\int_{c^-}^{d^-}e^{-r(t-\tau)}d|\nu_{\kappa(t,.)}|
  \\&
\leq 
2\,W_{[d,t)}(\kappa(t,.))\sup_{\tau\in[d,t)}W_{[c,d]}(\Psi(\tau,.))
  \\&\qquad
  +
  2\,e^{-r(t-d)}W_{[c,d)}(\kappa(t,.))\sup_{\tau\in[c,d)}W_{[c,d]}(\Psi(\tau,.))
  +
2\,e^{-r(t-d)} \sup_{[a,t]\times[a,t]}\vertiii{\Psi}\,\,W_{[c,d)}(\kappa(t,.)).
\end{align*}
When $d=t$, the same inequality holds but then $W_{[d,t)}(\kappa(t,.))$ is zero.
Setting $c=a$ and $d=t$, we get from the above majorization that
$W_{[a,t]}(F_r(\Psi)(t,.))\leq 4\|\kappa\|_{[a,b]}\|\Psi\|_{[a,b]}$, and
since $F_r(\Psi)(t,\tau)=0$ for $\tau\geq t$ we deduce that
$W_{[a,b]}(F_r(\Psi)(t,.))=W_{[a,t]}(F_r(\Psi)(t,.))$ is bounded, uniformly with respect to $t$.
Next, if we fix $\varepsilon>0$ and 
pick $\eta>0$ so small that $W_{[\tau,t)}(\kappa(t,.))\leq\varepsilon$ as soon as $t-\tau\leq\eta$ (this is possible because $\kappa\in\mathcal{K}_{[a,b]}$),
the same  estimate yields 
\begin{equation}
\label{estvarFr}
W_{[c,t)}(F_r(\Psi)(t,.))
\leq 4W_{[c,t)}(\kappa(t,.)) \|\Psi\|_{[a,b]}\leq 4\,\varepsilon\,\|\Psi\|_{[a,b]},\qquad t-c\leq\eta.
\end{equation}
Altogether, we just showed that 
$F_r(\Psi)\in\mathcal{K}_{[a,b]}$. Moreover, if we take $r$ so large that 
$e^{-r\eta}<\varepsilon$, then either $t-a\leq \eta$ and then
\eqref{estvarFr} with $c=a$ gives us
$W_{[a,t)}(F_r(\Psi)(t,.)\leq 4\varepsilon\|\Psi\|_{[a,b]}$, or else
$t-\eta>a$ in which case \eqref{estvarFr}  with $c=t-\eta$, together with 
our initial estimate when
$c=a$ and $d=t-\eta$, team up to produce:
\begin{align}
  \nonumber
  W_{[a,t)}(F_r(\Psi))(t,.)&=W_{[a,t-\eta]}(F_r(\Psi)(t,.))+W_{[t-\eta,t)}(F_r(\Psi)(t,.))
  \\\nonumber
  &\leq
2\varepsilon\sup_{\tau\in[t-\eta,t)}W_{[a,t-\eta]}(\Psi(\tau,.))
+2\varepsilon
    W_{[a,t-\eta)}(\kappa(t,.))\sup_{\tau\in[a,t-\tau)}\!\!W_{[a,t-\eta]}(\Psi(\tau,.))
  \\\nonumber
  &\qquad+
2\varepsilon\sup_{[a,t]\times[a,t]}\vertiii{\Psi}W_{[a,t-\eta)}(\kappa(t,.))
    +4\varepsilon\|\Psi\|_{[a,b]}
  \\\label{estcontr}
&\leq 2\,\varepsilon\,\|\Psi\|_{[a,b]}\,\left(3+2\|\kappa\|_{[a,b]}\right).
\end{align}
Consequently, as $W_{[a,t)}(F_r(\Psi)(t,.))=W_{[a,t]}(F_r(\Psi)(t,.))$
by the left continuity of $F_r(\Psi)(t,.)$,
we can ensure upon choosing $r$ sufficiently large that
the operator $F_r:\mathcal{K}_{[a,b]}\to\mathcal{K}_{[a,b]}$ has arbitrary 
small norm. Hereafter, we fix $r$ so that $\vertiii{F_r}<\lambda<1$.

Now, let $\widetilde{\rho}_0=0$ and define inductively:
\[\widetilde{\rho}_{k+1}(t,\beta)=-e^{-rt}\kappa(t,\beta)+F_r(\widetilde{\rho}_{k})(t,\beta).
\]
Clearly $(t,\beta)\mapsto
e^{-rt}\kappa(t,\beta)$ lies in $\mathcal{K}_{[a,b]}$, so  that
$\widetilde{\rho}_k\in \mathcal{K}_{[a,b]}$ for all $k$. Moreover, we get from what precedes that 
$\|\widetilde{\rho}_{k+1}-\widetilde{\rho}_k\|_{[a,b]}\leq \lambda \|\widetilde{\rho}_{k}-\widetilde{\rho}_{k-1}\|_{[a,b]}$.
Thus, by the shrinking lemma, $\widetilde{\rho}_k$ converges in $\mathcal{K}_{[a,b]}$ 
to the unique $\widetilde{\rho}\in\mathcal{K}_{[a,b]}$ such that
\begin{equation}
\label{Stieltjeskm2}
\begin{split}
  \widetilde{\rho}(t,\beta)&=-e^{-rt}\kappa(t,\beta)+F_r(\widetilde{\rho})(t,\beta)
  \\[-.8ex]
  &=-e^{-rt}\kappa(t,\beta)+\int_{\beta^-}^{t^-}e^{-r(t-\tau)}d\kappa(t,\tau)\widetilde{\rho}(\tau,\beta),
  \quad a\leq t,\beta \leq b.
\end{split}
\end{equation}
Putting $\rho(t,\beta):=e^{rt} \widetilde{\rho}(t,\beta)$, one can see that
$\rho$ lies in $\mathcal{K}_{[a,b]}$ if and only if $\widetilde{\rho}$ does, 
and that \eqref{Stieltjeskm2} is equivalent to \eqref{Stieltjesk}. This achieves the proof.
\end{proof}


\begin{proof}[Proof Lemma~\ref{solSVE}]
By Lemma~\ref{exresSV} we know that $\kappa$ has a unique resolvent, say  $\rho$. Define $y$ through \eqref{eqres2} so that $y(a)=g(a)$, by inspection.
  Since $g\in BV_r([a,b])$ and  $\rho(t,\cdot)$, $k(t,.)$ lie
in $BV_l([a,b])$, an integration by parts
\cite[thm.\ 7.5.9]{lojasiewicz1988introduction}\footnote{Reference
  \cite{lojasiewicz1988introduction} restricts to integration by parts over open intervals only,
  and we do the same at the cost of a slightly lengthier computation.} using 
\eqref{Stieltjesk} along with Fubini's theorem
and the relations $\kappa(t,\alpha)=\rho(t,\alpha)=0$ for $\alpha\geq t$
gives us:
\[
\begin{array}{rl}
\int_{a^-}^{t^-}d\kappa(t,\alpha)y(\alpha)   \hspace{-5em}&  \hspace{4.5em}=
  (\kappa(t,a^+)-\kappa(t,a))y(a)+\int_{a^+}^{t^-}d\kappa(t,\alpha)y(\alpha)
  \\ &
  =(\kappa(t,a^+)-\kappa(t,a))g(a)+\int_{a^+}^{t^-}d\kappa(t,\alpha)g(\alpha)
  -
  \int_{a^+}^{t^-}d\kappa(t,\alpha)\int_{a^-}^{\alpha^-} d\rho(\alpha,\beta)g(\beta)
  \\ &
  =(\kappa(t,a^+)-\kappa(t,a))g(a)+\int_{a^+}^{t^-}d\kappa(t,\alpha)g(\alpha)-
  \int_{a^+}^{t^-}d\kappa(t,\alpha)\int_{a^+}^{\alpha^-}
  d\rho(\alpha,\beta)g(\beta)
  \\ &
 \ \ \ \ \ \ \ \  \ \ \ \ \ \ \ \ \ \ \ \ \ \ \ \ -\int_{a^+}^{t^-}d\kappa(t,\alpha)(\rho(\alpha,a^+)-\rho(\alpha,a))g(a)
  \\ &
=(\kappa(t,a^+) -\kappa(t,a))g(a)
  +\int_{a^+}^{t^-}d\kappa(t,\alpha)g(\alpha)
       +\int_{a^+}^{t^-}d\kappa(t,\alpha)\int_{a^+}^{\alpha^-}\rho(\alpha,\beta)dg(\beta)
  \\ &
   \ \ \ \ \ \ \ \  \ \ \ \ \ \ \ \ \ \ \ \ \ \ \ \ -\int_{a^+}^{t^-}d\kappa(t,\alpha)\left[\rho(\alpha,\beta)g(\beta)
  \right]_{\beta=a^+}^{\beta=\alpha^-}-\int_{a^+}^{t^-}d\kappa(t,\alpha)(\rho(\alpha,a^+)-\rho(\alpha,a))g(a)
  \\ &
  =(\kappa(t,a^+)-\kappa(t,a))g(a)+\int_{a^+}^{t^-}d\kappa(t,\alpha)g(\alpha)+\int_{a^+}^{t^-}\left(\int_{\beta^-}^{t^-}d\kappa(t,\alpha)\rho(\alpha,\beta)\right)dg(\beta)
  \\&
  \ \ \ \ \ \ \ \  \ \ \ \ \ \ \ \ \ \ \ \ \ \ \ \
  +
  \int_{a^+}^{t^-}d\kappa(t,\alpha)\rho(\alpha,a^+)g(a)
  -\int_{a^+}^{t^-}d\kappa(t,\alpha)(\rho(\alpha,a^+)-\rho(\alpha,a))g(a)\\ & 
 =(\kappa(t,a^+)-\kappa(t,a))g(a)+\int_{a^+}^{t^-}d\kappa(t,\alpha)g(\alpha) 
  \\ &
  \ \ \ \ \ \ \ \  \ \ \ \ \ \ \ \ \ \ \ \ \ \ \ \
  +\int_{a^+}^{t^-}
\bigl(\rho(t,\beta)+\kappa(t,\beta)\bigr)dg(\beta)
  +\int_{a^+}^{t^-}d\kappa(t,\alpha)\rho(\alpha,a)g(a)
 \\ &
=(\kappa(t,a^+)-\kappa(t,a))g(a)+\left[\kappa(t,\alpha)g(\alpha)\right]_{\alpha=a^+}^{\alpha=t^-}
+\int_{a^+}^{t^-}
\rho(t,\beta)dg(\beta)
+\int_{a^+}^{t^-}d\kappa(t,\alpha)\rho(\alpha,a)g(a)\\ &
  =-\kappa(t,a)g(a)
+\left[ \rho(t,\beta)g(\beta)  \right]_{\beta=a^+}^{\beta=t^-}
-\int_{a^+}^{t^-}
d\rho(t,\beta)g(\beta)+\int_{a^-}^{t^-}d\kappa(t,\alpha)\rho(\alpha,a)g(a)
  \\ &
=-\kappa(t,a)g(a)-\rho(t,a^+)g(a)
-\int_{a^+}^{t^-}
d\rho(t,\beta)g(\beta)
  +\bigl(\kappa(t,a)+\rho(t,a)\bigr)g(a)
  \\ &
=-\int_{a^-}^{t^-}d\rho(t,\beta)g(\beta)=y(t)-g(t).
\end{array}
\]
Thus, $y$ is a solution to \eqref{ap32}. Clearly, it is measurable, and
it is also bounded since $\|\rho(t,.)\|_{BV([a,b])}$ is  bounded independently of $t$ and $g$ is bounded. If  $\widetilde{y}$ is another solution to
\eqref{ap32} then $\widetilde{y}(a)=y(a)=g(a)$ by inspection, so that
$z:=y-\widetilde{y}$ is
a bounded measurable solution to the homogeneous equation:
\[
z(t)=\int_{a^+}^{t^-} d\kappa(t,\tau) z(\tau), \qquad a\leq t\leq b.
\]
Pick $r>0$ to be adjusted momentarily, and set $\widetilde{z}(t):=e^{-rt}z(t)$ so that
\begin{equation}
\label{SVhomt}
\widetilde{z}(t)=\int_{a^+}^{t^-}e^{-r(t-\tau)}d\kappa(t,\tau)\widetilde{z}(\tau).
\end{equation}
Let $\eta>0$ be so small that $W_{[\tau,t)}(\kappa(t,.))\leq1/4$ as soon as $t-\tau\leq\eta$; this is possible because $\kappa\in\mathcal{K}_{[a,b]}$. Then, it follows from \eqref{SVhomt} that for $t-\eta>a$:
\begin{align*}
  \left|\widetilde{z}(t)\right|
  &\leq\left|
  \int_{a^+}^{(t-\eta)^+}e^{-r(t-\tau)^+}d\kappa(t,\tau)\widetilde{z}(\tau)\right|
  +\left|
    \int_{(t-\eta)^+}^{t^-}e^{-r(t-\tau)^-}d\kappa(t,\tau)\widetilde{z}(\tau)\right|
  \\
  & \leq 2e^{-r\eta} W_{(a,t-\eta]}(\kappa(t,\cdot))\sup_{(a,t-\eta]}|\widetilde{z}|
  +\frac{1}{2}\sup_{(t-\eta,t)}|\widetilde{z}|\leq
  \sup_{(a,t)}|\widetilde{z}|\bigl(2e^{-r\eta}\|\kappa\|_{[a,b]}+\frac{1}{2}\bigr),
  \end{align*}
  while for $t-\eta\leq a$ we simply get
  $|\widetilde{z}(t)|\leq \sup_{(a,t)}|\widetilde{z}|/2$. Hence,
  choosing
  $r$ large enough, we may assume that $|\widetilde{z}(t)|\leq \lambda\sup_{(a,t)}|\widetilde{z}|$ for some $\lambda<1$ and all $t\in[a,b]$. Thus, if we choose $\lambda'\in(\lambda,1)$ and  $t_0\in(a,b]$, we can find $t_1\in(a,t_0)$ such that
  $|\widetilde{z}(t_1)|\geq(1/\lambda')|\widetilde{z}(t_0)|$,
  and proceeding inductively we construct a sequence $(t_n)$ in $(a,t_0]$ with
   $|\widetilde{z}(t_n)|\geq(1/\lambda')^n|\widetilde{z}(t_0)|$.
  If we had $|\widetilde{z}(t_0)|>0$, this would contradict the boundedness of
  $\widetilde{z}$, therefore $\widetilde{z}\equiv0$ on $(a,b]$, whence $z\equiv0$ so that $y=\widetilde{y}$.
\end{proof}


\begin{proof}[Proof of Theorem~\ref{prop:fondamental}]

It follows from \eqref{solution_fondamentale} that  
$\alpha\mapsto X(t,\alpha)$ lies in $BV_{loc}(\corpsR)$ for all $t$
and  satisfies
\begin{equation}  
 \label{solution_fondamentaled}
d_\alpha X(t,\alpha)=
\sum\limits_{j=1}^N D_j(t)\, d_\alpha X(t-\tau_j,\alpha) \qquad\mathrm{on}\quad
[s,s+\tau_j),\quad 1\leq j\leq N.
\end{equation}
Note,  since   
  $\alpha\mapsto X(t,\alpha)$ is left continuous (by \eqref{solution_fondamentale} again) while $[s,s+\tau_j)$ is open on the right,
  that $d_\alpha X(t,\alpha)$ and $d_\alpha X(t-\tau_j,\alpha)$
in \eqref{solution_fondamentaled} coincide with (the differential of)
the restrictions to
 $[s,s+\tau_j)$ of the (matrix-valued) measures
 $\nu_{X(t,\cdot)_{|[s,s+\tau_N)}}$ and
   $\nu_{X(t-\tau_j,\cdot)_{|[s,s+\tau_N)}}$,
          provided that $t\geq s+\tau_N$.

Now, substituting \eqref{solution_fondamentaled} in \eqref{formula_representation}
formally yields \eqref{system_lin_formel}
for $t\geq s+\tau_N$.
Hence, by  uniqueness of a 
solution $y$ to \eqref{system_lin_formel}  satisfying $y(s+\theta)=\phi(\theta)$ for $\theta \in [-\tau_N,0]$, it is enough to check 
\eqref{formula_representation} when $s\leq t< s+\tau_N$.
For this, we adopt the point of view
of reference \cite{Hale}, which is to construe delay systems as 
Stieltjes-Volterra equations upon representing delays by measures. More precisely, we can rewrite \eqref{system_lin_formel} as a Lebesgue-Stieltjes integral:
\begin{eqnarray}
\label{ap1}
y(t)=\int_{-\tau_N^-}^{0^-} d\mu(t,\theta) y(t+\theta),\qquad t\geq s,
\end{eqnarray}
with 
\begin{eqnarray}
\label{ap2}
\mu(t,\theta)= \sum\limits_{j=1}^ND_j(t)\mathfrak{H}(\theta+\tau_j),
\end{eqnarray}
where $y(\tau)$ is understood to be $\phi(\tau-s)$ when
$s-\tau_N\leq\tau\leq s$ and $\mathfrak{H}(\tau)$ is the Heaviside function
which is zero for $\tau\leq0$ and 1 for $\tau>0$, so that the associated measure on an interval of the form
$[0,a]$ or $[0,a)$ is a Dirac delta at $0$. Note that $\mathfrak{H}(0)=0$, which is not the usual
convention, but if we defined $\mathfrak{H}$ so that $\mathfrak{H}(0)=1$ then
expanding \eqref{ap1} using \eqref{ap2} would not give us back
\eqref{system_lin_formel} for
the term $D_N(t)y(t-\tau_N)$ would be missing. Observe also,
since $\tau_j>0$ for all $j$, that the minus sign in the upper bound of
the integral in \eqref{ap1} is immaterial and could be traded for a plus.
For $s\leq t\leq s+\tau_N$, singling out the initial data in \eqref{ap1}
yields
\begin{equation}
\label{redVE}
y(t)=\int_{(s-t)^-}^{0^-} d\mu(t,\theta) y(t+\theta)+f(t)
\quad\mathrm{with}\quad
f(t):=\int_{-\tau_N^-}^{(s-t)^-} d\mu(t,\theta) \phi(t+\theta-s), 
\end{equation}
where we took into account, when separating the integrals, that $\theta\mapsto\mu(t,\theta)$ is left continuous, while the integral over the empty interval is
understood to be zero.
It will be convenient to study \eqref{redVE} for $t\in[s,s+\tau_N]$, even though in the end the values of $y(t)$ only matter to us for $t\in[s,s+\tau_N)$. Define 
\begin{equation}
 \label{defSVk}
k(t,\tau):= \left\{
  \begin{array}{cl}
    \mu(t,\tau-t)-\sum_{j=1}^N D_j(t)&\mathrm{for}\ \tau\in [s,t],\\
 0 &\mathrm{for}\ \tau> t,   
  \end{array}\qquad t,\tau\in[s,s+\tau_N].
\right.\end{equation}
  
Note that $k(t,\tau)=0$ when  $t-\tau<\tau_1$, and
$d_\tau k(t,\tau)=d_\tau\mu(t,\tau-t)$ on $[s,s+\tau_N]$ for fixed $t$.  Hence,
\eqref{redVE} becomes
\begin{equation}
\label{ap3}
y(t)=\int_{s^-}^{t^-} dk(t,\tau) y(\tau)+f(t), \qquad s\leq t\leq s+\tau_N.
\end{equation}
Now, \eqref{ap3} is the Stieltjes-Volterra equation we shall work with.

It suffices to prove \eqref{formula_representation} 
for $t\in[s, s+\tau_N)$ under the additional assumption that
$\phi$ and the $D_j$, which are continuous by hypothesis, also have bounded 
and locally bounded variation, on $[-\tau_N,0]$ and $\mathbb{R}$ respectively. Indeed, functions of bounded variation
are dense in $C_s$ 
(say, because 
$C^1$-functions are), and if $\{\phi_k\}$ converges uniformly to $\phi$ in $C_s$ while $\{D_{j,k}\}$
converges uniformly to $D_j$ in $[s,s+\tau_N]$ as $k\to\infty$, then the 
solution to \eqref{system_lin_formel}  with initial condition
$\phi_k$ and coefficients $D_{j,k}$ converges uniformly on $[s,s+\tau_N]$ 
to the solution 
with initial condition $\phi$ and coefficients $D_j$, as is obvious by inspection. Hence, we shall assume without loss of generality that $\phi$ has bounded variation and the $D_j$ have locally bounded variation. 
Then, since \eqref{ap2} and \eqref{redVE} imply that
\begin{equation}
  \label{expf}
  f(t)=\sum_{\tau_\ell\in(t-s,\tau_N]} D_\ell(t)\phi(t-s-\tau_\ell), 
\end{equation}
it is clear from \eqref{expf} and \eqref{varprod} that
$f$ is in $BV_r([s,s+\tau_N])$.

As $\mathfrak{H}$ is left continuous and the $D_j$ are bounded
on $[s,s+\tau_N]$, it is easy to check that $k(t,\tau)$ defined in \eqref{defSVk} is a Stieltjes-Volterra  kernel of type $B^{\infty}$ on $[s,s+\tau_N]\times[s,s+\tau_N]$.
Let $\rho$ denote  the resolvent of this Stieltjes-Volterra kernel;  it exists thanks to Lemma~\ref{exresSV}.
As $f$ defined in \eqref{redVE}  lies in $BV_r([s,s+\tau_N])$,
the solution $y$ to \eqref{ap3} is given, in view of Lemma \ref{solSVE}, by
\begin{equation}
  \label{solvrs}
y(t)=f(t)-\int_{s^-}^{t^-}d\rho(t,\alpha) f(\alpha),\qquad s\leq t\leq  s+\tau_N.
\end{equation}
Since $\rho(t,\alpha)=0$ when $\alpha\geq t$,
the integral $\int_{s^-}^{t^-}$ can be replaced by $\int_{s^-}^{(s+\tau_N)^+}$ in
\eqref{solvrs}. Thus, putting $\delta_t$ for the Dirac delta distribution at $t$ and
$\widetilde{X}(t,\alpha):=\mathcal{H}(t-\alpha) I_d +\rho(t,\alpha)$ with
$\mathcal{H}(\tau)$ the ``standard'' Heaviside function which is
$0$ for $\tau<0$ and $1$ for $\tau\geq0$,
we deduce from \eqref{expf} and \eqref{solvrs}, since $d_\alpha \mathcal{H}(t-\alpha)=-\delta_t$
on $[s,s+\tau_N]$ for $s\leq t<s+\tau_N$,
that
\begin{eqnarray*}
  y(t)=\!-\!\int_{s^-}^{(s+\tau_N)^+}\!\!\!\!\!\!\!\!d\widetilde{X}(t,\alpha) f(\alpha)=\!-\!\int_{s^-}^{(s+\tau_N)^+}\!\!\!\!\!\!\!\!d\widetilde{X}(t,\alpha) \!\left(\sum_{\tau_\ell\in(\alpha-s,\tau_N]} \!\!D_\ell(\alpha)\phi(\alpha-s-\tau_\ell)\right)\!,\quad s\leq t< s+\tau_N.\\
  \end{eqnarray*}
  Rearranging,
  we get that
\begin{eqnarray*}
  y(t)=-\sum_{j=1}^N \int_{s^-}^{(s+\tau_j)^-}d\widetilde{X}(t,\alpha) D_j(\alpha)\phi(\alpha-s-\tau_j),\qquad s\leq t< s+\tau_N,\\
  \end{eqnarray*}
  which is what we want (namely:
  formula \eqref{formula_representation} for
  $s\leq t<s+\tau_N$) if only we can show that $\widetilde{X}(t,\alpha)$
  coincides with $X(t,\alpha)$ when  $\alpha\in [s,s+\tau_j)$ for each $j$ and every
  $t\in[s,s+\tau_N)$;
here, $X(t,\alpha)$ is defined by \eqref{solution_fondamentale}
where we set $s=\alpha$.

For this, we first observe that  $X(t,\alpha)=\widetilde{X}(t,\alpha)=0$ when
$\alpha>t$ and that  $X(t,t)=\widetilde{X}(t,t)=I_d$. Hence, we need only consider the case $\alpha\in[s,t)$ with $s<t<s+\tau_N$.
For $s\leq \alpha<t$, we get that
  \begin{eqnarray*}
    -k(t,\alpha)=k(t,t^-)-k(t,\alpha)
    =    \int_{\alpha^-}^{t^-}d k(t,\tau).
    \end{eqnarray*}
      Thus, \eqref{Stieltjesk} (where $\kappa=k$) in concert with the definition of
      $\widetilde{X}(t,\alpha)$
  imply that
  \begin{align*}
    \widetilde{X}(t,\alpha)
    &=I_d\,
                             \mathcal{H}(t-\alpha)-k(t,\alpha)+\int_{\alpha^-}^{t^-}dk(t,\tau)\rho(\tau,\alpha)
    \ =\ I_d +\int_{\alpha^-}^{t^-}dk(t,\tau)\bigl(I_d+\rho(\tau,\alpha)\bigr)
    \\
    &
    =I_d +\int_{\alpha^-}^{t^-}dk(t,\tau)\widetilde{X}(\tau,\alpha).
  \end{align*}
  Now, on $[\alpha,t)$, we compute from \eqref{ap2} and \eqref{defSVk}
   that $d_\tau k(t,\tau)=\sum_{t-\tau_j\geq\alpha}D_j(t)\delta_{t-\tau_j}$ and hence,
   since $\widetilde{X}(t-\tau_j,\alpha)=0$ when $\alpha>t-\tau_j$,
   the previous equation becomes:
   \begin{eqnarray}  
 \label{solution_fondamentale1t}
     \widetilde{X}(t,\alpha)=
I_d+\sum_{j=1}^N D_j(t) \widetilde{X}(t-\tau_j,\alpha) 
     \qquad \mbox{ for $s\leq \alpha<t$\ and\ $s\leq t <s+\tau_N$}.
   \end{eqnarray}
Comparing \eqref{solution_fondamentale1t} and \eqref{solution_fondamentale},
we see that  $\widetilde{X}(t,\alpha)$ and $X(t,\alpha)$ coincide
on $[s,s+\tau_N)\times[s,s+\tau_N)$, thereby ending the proof.
\end{proof}

  \section{Concluding remarks}
  We derived in this note a representation formula for linear non-autonomous
  difference-delay equations.

Note that a very different representation formula for the solutions of the same class of systems
appears in \cite[Section 3.2]{Chitour2016};
it gives an explicit combinatorial formula for the solutions in terms of sums of products of matrices
whose number of terms and factors increases with time.
In the present representation formula, this combinatorial complexity
is somehow encoded by an integral
formulation that stems out of interpreting System~\eqref{system_lin_formel} as a Volterra
integral equation, which provides a powerful tool to derive estimates.


As pointed out at the beginning of the paper, the present representation formula is 
  meant in the first place to help stability proofs; it
  plays a crucial role, for instance, in the derivation
of a necessary and sufficient condition for exponential stability
of periodic systems of the form \eqref{system_lin_formel},
see the manuscript \cite{Soumis-SIMA}.
Let us further stress  that the stability of more general classes of systems, like periodic differential delay systems ``of neutral
type''; {\it i.e.}, of the form
\begin{equation}
  \label{eq:1}
  \frac{\mathrm{d}}{\mathrm{d}t}\left(
y(t)-\sum_{j=1}^ND_j(t)y(t-\tau_j)
  \right)=B_0(t)y(t)+\sum_{j=1}^NB_j(t)y(t-\tau_j)
\end{equation}
(for some periodic matrices $B_0,\ldots,B_N$),
relies on the stability of \eqref{system_lin_formel} through perturbation arguments described
in \cite{Hale} in the time-invariant case.
It would be very interesting to extend the stability results from \cite{Soumis-SIMA} to such
differential systems of neutral type, and to this effect we feel
that a representation formula should be extremely useful.

{\small

}

\end{document}